\theoremstyle{plain}
\author[K. DIARRA]{Karamoko DIARRA}
\address{ DER de Math\'ematiques et d'informatique, FAST, Universit\'e des Sciences, des Techniques et des Technologies de Bamako, BP : E $3206$ Mali.}
\email{diarak2005@yahoo.fr}
 \email{karamoko.diarra@univ-rennes1.fr}
\title[Solutions alg\'ebriques]{Solutions alg\'ebriques partielles des \'equations isomonodromiques sur les courbes de genre $2$}
\begin{document}
\frontmatter

\begin{abstract}
On \'etudie la possibilit\'e de construire des solutions alg\'ebriques partielles des \'equations d'isomonodromie
pour les connections holomorphes de rang $2$ sur les courbes de genre $2$ en adaptant la m\'ethode d'Andreev et Kitaev
par les familles de Hurwitz. Nous classifions tous les cas o\`u la connection est \`a monodromie Zariski dense.
\end{abstract}

\subjclass{34M55, 34M56, 34M03}
\keywords{\'Equations diff\'erentielles ordinaires, D\'eformations isomonodromiques, Familles de Hurwitz}
\altkeywords{\'Equations diff\'erentielles ordinaires, D\'eformations isomonodromiques, Familles de Hurwitz}
\maketitle
\mainmatter

\section*{Introduction}
L'\'equation de Painlev\'e VI est une \'equation diff\'erentielle ordinaire non-lin\'eaire du second ordre
dont les solutions param\'etrent les d\'eformations isomonodromiques de syst\`emes diff\'erentiels
lin\'eaires de rang $2$ avec $4$ p\^oles simples sur la sph\`ere de Riemann. Les \'equations de Painlev\'e
trouvent leur origine dans les travaux de Paul Painlev\'e : ce sont ``les'' \'equations diff\'erentielles
d'ordre $2$ irr\'eductibles sans singularit\'e mobile. En particulier, leur solution g\'en\'erale est tr\`es
transcendante ; pour autant, il existe des solutions alg\'ebriques ou de de type hyperg\'eom\'etrique.
La recherche des solutions sp\'eciales, notamment alg\'ebriques, a fait l'objet de nombreux travaux 
durant la derni\`ere d\'ecennie \cite{DubrovinMazzocco, Mazzocco1, Mazzocco2, Hitchin1, Hitchin2, 
AndreevKitaev, Kitaev1, Kitaev2,  Boalch1, Boalch2, Boalch3, Boalch4, Boalch5} ;
la classification compl\`ete est donn\'ee dans \cite{LisovyyTykhyy}, ent\'erinant la liste \'etablie dans \cite{Boalch6}. 

Plus g\'en\'eralement, on peut consid\'erer une courbe projective lisse $X$ de genre $g$ sur $\mathbb C$ 
et la donn\'ee de $n$ points distincts (ou du diviseur $D$ r\'eduit correspondant) et consid\'erer les \'equations
d'isomonodromie pour les connections logarithmiques de rang $2$ sur $X$, de diviseur $D$. Lorsque $g=0$,
on obtient les syst\`emes des Garnier (voir \cite{IKSY}), le cas le plus simple $n=4$ correspondant \`a Painlev\'e VI.
Pour les \'equations isomonodromiques en genre $g>0$, voir par exemple \cite{Krichever}. On s'attend \`a ce que la transcendance
de la solution g\'en\'erale croisse avec le genre et le nombre de p\^oles, mais aussi \`a ce qu'il y ait des solutions sp\'eciales,
notamment alg\'ebriques. Les solutions locales sont \`a $N=3g-3+n$ variables (variables de d\'eformation de la courbe et des $n$ p\^oles) ;
on suppose toujours $N>0$. Une premi\`ere m\'ethode pour construire des solutions alg\'ebriques est de consid\'erer
des connexions \`a monodromie finie (voir \cite{Hitchin1,Hitchin2,Boalch1,Boalch2,Boalch5}) ; ceci permet de retrouver
presque toutes les solutions de Painlev\'e VI \`a sym\'etrie d'Okamoto pr\`es (voir \cite{Boalch6}). De telles solutions 
alg\'ebriques existent en tout genre $g$, quel que soit le nombre $n$ de p\^oles. L'autre m\'ethode, que nous avons exploit\'e
dans notre pr\'ec\'edent article \cite{Diarra}, est celle utilis\'ee par Andreev et Kitaev \cite{AndreevKitaev,Kitaev1,Kitaev2}
que nous allons maintent d\'ecrire.

On se donne une connection logarithmique $(E,\nabla)$ sur une courbe $X$ de genre $g$ avec $n$ p\^oles.
On se donne une famille de rev\^etements ramifi\'es $\phi_t:X_t\to X$ d\'ependant alg\'ebriquement d'un param\`etre $t\in T$.
La famille de connexions 
$$(E_t,\nabla_t):=\phi_t^*(E,\nabla)$$
d\'efinie sur la famille de courbes $X_t$ est isomonodromique et d\'efinit une solution alg\'ebrique partielle de l'\'equation 
d'isomonodromie correspondante. Pour un rev\^etement $\phi_t$ g\'en\'eral, ne ramifiant pas au dessus des p\^oles $(E,\nabla)$,
il est facile de voir que l'espace de d\'eformation de $\phi_t$ est de dimension strictement 
plus petite que la dimension de d\'eformation de $(X_t,D_t)$ o\`u $D_t$ est le diviseur des p\^oles de $(E_t,\nabla_t)$.
La d\'eformation isomonodromique param\'etr\'ee par $T$ ne sera que partielle. Pour avoir une solution alg\'ebrique compl\`ete,
il faudra que le rev\^etement ramifie beaucoup au dessus des p\^oles de $(E,\nabla)$ ; en g\'en\'eral, il faudra aussi imposer
aux p\^oles de $(E,\nabla)$ d'avoir une monodromie locale finie de sorte que les relev\'es de ces p\^oles par $\phi_t$ deviennent
des singularit\'es apparentes : on pourra alors les chasser par une transformation birationnelle.

En fait, toute solution alg\'ebrique de Painlev\'e VI peut \^etre reconstruite (\`a sym\'etrie d'Okamoto pr\`es) 
par rev\^etement ramifi\'e d'une connexion $(E,\nabla)$ hyperg\'eom\'etrique ; le cas o\`u $(E,\nabla)$
est \`a monodromie Zariski dense est classifi\'e dans \cite{Doran}.
Dans \cite{Diarra}, nous avons classifi\'e tous les $X$, $(E,\nabla)$  \`a monodromie Zariski dense et $\phi_t:X_t\to X$
donnant lieu \`a une solution alg\'ebrique compl\`ete de l'\'equation d'isomonodromie associ\'ee. On montre que n\'ecessairement
$X$ et $X_t$ sont de genre $0$ avec $(E,\nabla)$ hyperg\'eom\'etrique et on trouve $6$ possibilit\'es de ramification pour $\phi_t$.
On a ainsi obtenu des solutions alg\'ebriques de syst\`emes de Garnier de rang $N=2$ et $3$.

Dans cet article, nous utilisons la m\^eme m\'ethode pour construire des d\'eformations isomonodromiques alg\'ebriques partielles 
de connections holomorphes sur les courbes de genre $g=2$. Nous supposons donc $X_t$ une courbe de genre $2$ et 
$(E_t,\nabla_t)$ n'ayant que des singularit\'es apparentes. On y pensera comme des connexions projectives holomorphes,
c'est \`a dire sans p\^ole. La dimension de d\'eformation de $\phi_t$ est major\'ee par le 
nombre $b$ de points de branchements (valeurs critiques de $\phi_t$) en dehors des p\^oles de $(E,\nabla)$.
Ici, le nombre de variables est $N=3$ et on cherche donc des d\'eformations alg\'ebriques de dimension $b=1$, $2$ ou $3$.
On utilise la m\^eme m\'ethode que dans notre pr\'ec\'edent article \cite{Diarra} : on consid\`ere la structure orbifolde
de $X$ sous-jacente \`a la connexion (voir \cite{Diarra}, section 2) et le fait que le degr\'e du rev\^etement ramifi\'e
agit multiplicativement sur la caract\'eristique d'Euler orbifold.

Dans la section 1, nous montrons que le nombre de param\`etres libres d\'epend de la g\'eom\'etrie de l'orbifolde.
Pour avoir une solution alg\'ebrique compl\`ete, i.e. $b=3$, on doit avoir $\chi>0$ ce qui force $X=\mathbb P^1$ 
et $(E,\nabla)$ \`a provenir d'une \'equation 
hyperg\'eom\'etrique associ\'ee \`a un pavage de la sph\`ere ; en particulier, le groupe de monodromie de $(E,\nabla)$
ou de $(E_t,\nabla_t)$ est fini. R\'eciproquement, toute connexion $(\tilde E,\tilde\nabla)$ \`a monodromie finie 
sur une courbe admet une d\'eformation isomonodromique alg\'ebrique ; une telle d\'eformation est en particulier 
construite par pull-back \`a param\`etre d'une connexion hyperg\'eom\'etrique par le th\'eor\`eme de Klein \cite{Klein}.
Pour $b=2$, on doit avoir $\chi=0$ ce qui force $(E,\nabla)$ et donc $(E_t,\nabla_t)$ \`a \^etre r\'eductible
(voir Proposition \ref{courbeC}).
Ainsi dans le cas o\`u $(E,\nabla)$ est \`a monodromie Zariski dense, on a n\'ecessairement $b\le1$.

Dans les sections 2 et 3, nous dressons la liste des donn\'ees de Hurwitz possibles pour une d\'eformation
\`a $b=1$ param\`etre de rev\^etements lorsque $(E,\nabla)$ est \`a monodromie Zariski dense.
Dans les tables \ref{hypergeom} et \ref{highergenus}, on trouve les degr\'es et ramifications possibles.

Dans la section 4, nous montrons l'existence d'un rev\^etement ramifi\'e pour chaque donn\'ee de Hurwitz 
de notre liste. Pour vraiment classifier les solutions alg\'ebriques correspondantes, il faudrait classifier
tous les rev\^etements possibles modulo conjugaison d'une part (voir \cite{Mednykh}), et modulo l'action naturelle du groupe modulaire 
(ou Mapping Class Group) d'autre part. Mais ceci n\'ecessite d'autres techniques (voir par exemple \cite{Boalch2})
et notamment l'utilisation d'un logiciel de calcul formel ; nous reportons cette \'etude \`a un article ult\'erieur.

Enfin, dans la derni\`ere section, nous construisons explicitement un exemple de degr\'e $6$.
La famille de rev\^etement est alors param\'etr\'ee par la courbe elliptique $E=\{y^2=x^3+1\}$.
\`A tout point $(x_t,y_t)\in E$ de cette courbe, nous associons la courbe de genre $2$
$$X_t:=\{Y^2+(x_t+1)(X^2-1)(3X^4+3(x_t-1)X^2+x_t^2-x_t+1)=0\}$$
avec le rev\^etement ramifi\'e 
$$\phi_t:X_t\to \mathbb P^1\ ;$$
$$\begin{matrix}{(X,Y)\mapsto}{\frac{(x_t-2)^2X^2(Y-3y_t)+4(x_t^2-x_t+1)(y_t-Y)+(x_t^2+2x_t+1-3y_t)X^6-6y_t(x_t-2)X^4}{2(1+x_t)^2X^6}.} \end{matrix}$$
Ce dernier ramifie totalement au dessus de $0$, $1$ et $\infty$ \`a l'ordre $3$, $3$ et $6$ respectivement ;
en dehors de ces $3$ fibres, il a exactement un point de branchement 
$$\phi_t(0,y_t)=\frac{x_t(x_t^3-8)}{4(x_t^3+1)}.$$
Par construction, la courbe $X_t$ est bi-elliptique, rev\^etement double de la courbe elliptique $E$.
Les r\'esultats de cette note sont partiellement issus de notre th\`ese \cite{DiarraThese} effectu\'ee sous la direction de F. Loray \`a l'Universit\'e de Rennes 1.

\section{Structure orbifolde, formule de Riemann-Hurwitz et premi\`ere borne.}

Nous reprenons la d\'emarche de \cite{DiarraThese,Diarra} ; nous renvoyons \`a la section 2 de \cite{Diarra} pour la notion de structure orbifolde.

On se donne un rev\^etement ramifi\'e $\phi : \tilde X\to X$ de degr\'e $d$ entre surfaces de Riemann compactes 
avec $\tilde X$ de genre $2$. On suppose donn\'ee une connexion logarithmique $(E,\nabla)$ de rang $2$ sur $X$ :
$E\to X$ est un fibr\'e vectoriel de rang $2$ et $\nabla:E\to E\otimes\Omega^1_X(D)$ une connexion m\'eromorphe 
de diviseur $D$ r\'eduit, de support disons $\{x_1,\ldots,x_n\}\subset X$. On suppose que $\phi^*(E,\nabla)$ ne poss\`ede
que des singularit\'es apparentes, i.e. devient holomorphe apr\`es transformation de jauge birationnelle.

Associons \`a $(E,\nabla)$ une structure orbifolde \`a singularit\'es coniques sur les $x_i$ : si la monodromie
locale de $(E,\nabla)$ autour de $x_i$ est d'ordre $p_i$, on fixe un point conique d'angle $\frac{2\pi}{p_i}$ en $x_i$.
Notons $p=(p_1,\ldots,p_n)$ la donn\'ee de la structure orbifolde, que l'on peut aussi voir comme une fonction
$p:X\to \mathbb Z_{>0}\cup\{\infty\}$ prenant la valeur $1$ presque partout.
On d\'efinit la caract\'eristique d'Euler de l'orbifolde $(X,p)$ par
$$\chi(X,p):=2-2g+\sum_{i=1}^n(\frac{1}{p_i}-1)=2-2g-n+\frac{1}{p_1}+\cdots+\frac{1}{p_n}$$
o\`u $g$ est le genre de $X$. Lorsque $\chi(X,p)<0$, cette struture orbifolde peut \^etre r\'ealis\'ee
par une m\'etrique de courbure $-1$ sur $X$ (compatible avec la structure conforme de $X$)
et l'aire de $X$ pour cette m\'etrique est alors donn\'ee par $-2\pi\chi(X,p)$. 
La repr\'esentation de monodromie de $(E,\nabla)$ se factorise comme repr\'esentation du groupe
fondamental orbifold de $(X,p)$, et, parmis les structures orbifoldes sur $X$ satisfaisant cette propri\'et\'e,
$(X,p)$ est minimale (d'aire maximale dans le cas hyperbolique). 

On d\'efinit $\tilde p:=\phi^*p$ par :
\begin{itemize}
\item $\tilde p(\tilde x)=p(\phi(\tilde x))$ si $\tilde x\in \tilde X$ est un point r\'egulier
pour $\phi$, 
\item $\tilde p(\tilde x)=p\cdot p(\phi(\tilde x))$ si par contre $\phi$ ramifie \`a l'ordre $p\in\mathbb Z_{>1}$ en $\tilde x$
(i.e. en coordonn\'ees locales $\phi(z)=z^{p}$).
\end{itemize}
Attention, les points coniques deviennent d'angle rationnel en g\'en\'eral : $\tilde p:\tilde X\to \mathbb Q_{>0}\cup\{\infty\}$.
On d\'efinit la caract\'eristique d'Euler de la m\^eme mani\`ere et la formule de Riemann-Hurwitz s'\'ecrit alors :
$$\chi(\tilde X,\tilde p)=d\cdot\chi(X,p).$$ 
Sous notre hypoth\`ese que $\phi^*(E,\nabla)$ ne poss\`ede que des singularit\'es apparentes,
on d\'eduit ais\'ement que $\tilde p$ n'a que des points coniques d'angle multiple de $2\pi$, 
i.e. $\frac{1}{\tilde p}$ est \`a valeurs enti\`eres. Si l'on note 
$$b:=\sum_{\tilde x\in\tilde X}(\frac{1}{\tilde p(\tilde x)}-1)\ \in \mathbb Z_{\ge0},$$
($2\pi b$ est l'exc\'edent d'angles total sur la surface) alors il vient 
$$\chi(\tilde X,\tilde p)=2-2\tilde g+b=b-2.$$
Notons que les points coniques de $\tilde X$ proviennent ou bien du fait qu'on a ``trop'' ramifi\'e
au dessus des $x_i$ (\`a un ordre $p=k p_i$ pour un $k>1$ ce qui contribue pour $k$ dans $b$) 
ou encore du fait que $\phi$ a des points de ramifications ``libres'', c'est \`a dire en dehors des $x_i$.
Ces derniers sont les param\`etres permettant de d\'eformer le rev\^etement ramifi\'e $\phi$ sans 
modifier sa combinatoire au dessus des $x_i$ : pour une telle d\'eformation $\phi_t:\tilde X_t\to X$, les relev\'es 
$\phi_t^*(E,\nabla)$ sont tous \`a singularit\'es apparentes et produisent une d\'eformation isomonodromique
de connexions holomorphes sur $\tilde X_t$. Pour r\'esumer, la formule de Riemann-Hurwitz nous donne :

\begin{prop}\label{courbeC}  Sous les hypoth\`eses et notations pr\'ec\'edentes, on a l'\'egalit\'e
\begin{equation}\label{G}
b=2+\left(2-2g+\sum_{i=1}^n(\frac{1}{p_i} -1)\right)d
\end{equation}
et $b$ majore la dimension de d\'eformation de $\phi$.
\end{prop}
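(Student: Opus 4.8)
The plan is to treat the statement in two independent pieces: the identity \textup{(\ref{G})}, which is immediate from the orbifold Riemann--Hurwitz formula recalled above, and the bound on the deformation dimension of $\phi$, which needs a short Hurwitz-space argument. For \textup{(\ref{G})}: since $\tilde X$ has genus $\tilde g=2$ and, by hypothesis, $\phi^{*}(E,\nabla)$ has only apparent singularities — so that $1/\tilde p$ is integer-valued and $b=\sum_{\tilde x}(1/\tilde p(\tilde x)-1)\in\mathbb Z_{\ge0}$ — the definition of the orbifold Euler characteristic gives $\chi(\tilde X,\tilde p)=2-2\tilde g+b=b-2$, while $\chi(X,p)=2-2g+\sum_{i=1}^{n}(1/p_i-1)$ by definition; substituting both into $\chi(\tilde X,\tilde p)=d\cdot\chi(X,p)$ and solving for $b$ gives exactly \textup{(\ref{G})}.

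For the bound I would fix $X$ together with $(E,\nabla)$, hence the conical points $x_1,\dots,x_n$ and their orders $p_i$, and view the germ of the deformation space of $\phi$ as lying inside the family of all branched covers $\phi':\tilde X'\to X$ of degree $d$, with $\tilde X'$ of genus $2$, whose ramification above each $x_i$ has the same combinatorial type as that of $\phi$; by the same local computation this is exactly the condition ensuring that $(\phi')^{*}(E,\nabla)$ still has only apparent singularities, i.e. that $\phi'$ is an admissible isomonodromic deformation of $\phi$. By the Riemann existence theorem such a cover is determined, up to isomorphism, by its branch points lying outside $\{x_1,\dots,x_n\}$ — the \emph{free} branch points — together with a discrete monodromy datum in a symmetric group $S_d$; hence the only continuous moduli are the positions of the free branch points in $X\setminus\{x_1,\dots,x_n\}$, and the deformation space of $\phi$ has dimension at most the number $\beta$ of its free branch points. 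It remains to see that $\beta\le b$: in $b=\sum_{\tilde x}(1/\tilde p(\tilde x)-1)$ every term is $\ge0$, since each $1/\tilde p(\tilde x)$ is a positive integer, and over each free branch point there lies at least one ramification point $\tilde x$ at which $1/\tilde p(\tilde x)$ is an integer $\ge2$, contributing at least $1$ to $b$; summing over the $\beta$ free branch points gives $\beta\le b$, which is the asserted bound.

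The part deserving the most care is this last bound, not \textup{(\ref{G})}: one must argue cleanly that a combinatorics-preserving deformation of $\phi$ introduces no continuous parameter beyond the positions of the free branch points — in particular that collisions of branch points, or of a branch point with one of the $x_i$, occur only on the boundary of the relevant Hurwitz stratum and not in its interior — and one should keep in mind that a free ramification point of order $e>2$, or several ramification points over a single free value, still amount to a single modulus while contributing more than $1$ to $b$, so that the inequality $\beta\le b$ may well be strict. It is precisely this discrepancy that is exploited in Sections 2 and 3 to force $b$ small when $(E,\nabla)$ has Zariski-dense monodromy.
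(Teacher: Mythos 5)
Your proof is correct and follows essentially the same route as the paper: the proposition there is stated as a summary of the preceding discussion, with (\ref{G}) obtained by substituting $\tilde g=2$ and the definition of $b$ into the multiplicativity $\chi(\tilde X,\tilde p)=d\cdot\chi(X,p)$, and the bound on the deformation dimension justified by the remark that the only continuous parameters are the free ramification (branch) points, each contributing at least $1$ to $b$. Your Riemann-existence/Hurwitz-space argument is simply a more explicit version of that one-line remark, not a different method.
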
 

\subsection{Cas sph\'erique}
Si l'on veut construire une solution alg\'ebrique compl\`ete, alors il faut $b\ge3$, ce qui implique que 
l'orbifolde $(X,p)$ est de caract\'eristique d'Euler $\chi(X,p)>0$. Ces orbifoldes sont classifi\'ees par Klein
dans \cite{Klein} et correspondent aux quotients de $\mathbb P^1$ par les sous-groupes finis de $\mathrm{PGL}_2(\mathbb C)$.
En particulier, le groupe fondamental orbifold est fini, ce qui implique que $(E,\nabla)$ est \`a monodromie finie !
Il en sera de m\^eme pour $\phi_t^*(E,\nabla)$. R\'eciproquement, une connexion holomorphe $(\tilde E,\tilde\nabla)$
sur $\tilde X$ \`a monodromie finie admet une d\'eformation isomonodromique compl\`ete alg\'ebrique.
On pourrait classifier ces d\'eformations de mani\`ere analogue \`a \cite{Boalch2,Boalch5}, c'est \`a dire classifier
les repr\'esentations du groupe fondamental de $\tilde X$ dans les sous-groupes finis de $\mathrm{SL}_2(\mathbb C)$.
Nous laissons cela \`a un article ult\'erieur. Notons que de telles d\'eformations sont automatiquement construites 
par d\'eformation de rev\^etements ramifi\'es au dessus des mod\`eles hyperg\'eom\'etriques standards
$$(p_0,p_1,\infty)=(1,p,p),\ (2,2,p),\ (2,3,3),\ (2,3,4)\ \text{et}\ (2,3,5)$$
(voir \cite{Klein}).

\subsection{Cas euclidien}
Si l'on cherche maintenant une d\'eformation alg\'ebrique de dimension $2$ (codimension $1$)
qui ne soit pas donn\'ee par une monodromie finie, alors on doit avoir $b=2$ et donc $\chi(X,p)=0$.
L'orbifolde est alors un des quotient de la droite complexe $\mathbb C$ par un groupe discret
de covolume fini et on trouve ou bien que $X$ est une courbe elliptique sans point orbifold, ou bien 
$\mathbb P^1$ avec une des structures orbifoldes suivantes :
$$(2,3,6),\ (2,4,4),\ (3,3,3)\ \text{et}\ (2,2,2,2).$$
Notons que $(2,2,\infty)$ est exclu puisque les singularit\'es orbifoldes doivent essentiellement 
dispara\^itre sur le rev\^etement fini $\tilde X$. Dans chacun des cas, le groupe fondamental 
est r\'esoluble (virtuellement ab\'elien) et donc $(E,\nabla)$ doit \^etre r\'eductible ou di\'edral ;
il en sera de m\^eme  pour $\phi_t^*(E,\nabla)$. Ce cas fera l'objet d'un autre article.

\subsection{Cas hyperbolique}
Si $(E,\nabla)$ est \`a monodromie Zariski dense, il ne nous reste que  $b=0$ ou $1$ comme possibilit\'e.
Le cas $b=0$ est rigide, il n'y aura pas de d\'eformation. Notons cependant qu'il a d\'ej\`a \'et\'e 
\'etudi\'e dans \cite{AbuOsmanRosenberger} ; on y trouve la liste des orbifoldes possibles avec
le degr\'e correspondant. Dans la prochaine section, nous donnons la classification similaire
dans le cas $b=1$.

\section{Classification dans le cas o\`u $(X,\nabla)$ est hyperg\'eom\'etrique.}

On suppose dans cette section $X=\mathbb P^1$ et $(E,\nabla)$ avec p\^oles sur $0$, $1$ et $\infty$.
On suppose en outre la monodromie de $(E,\nabla)$ Zariski dense, et donc l'orbifolde sous-jacente $(p_0,p_1,p_\infty)$
hyperbolique : 
$$\frac{1}{p_0}+\frac{1}{p_1}+\frac{1}{p_\infty}<1.$$
On suppose en outre que l'on peut d\'eformer $\phi:\tilde X\to X$, c'est \`a dire $b=1$ avec les notations
pr\'ec\'edentes.

\begin{prop}\label{courbeC0} On a l'\'egalit\'e
\begin{equation}\label{G0}
\left(1-\frac{1}{p_0}-\frac{1}{p_1}-\frac{1}{p_\infty}\right)d=1.
\end{equation}
De plus, on obtient les in\'egalit\'es suivantes : 
$$(1-\frac{1}{p_0}-\frac{1}{p_1})p_{\infty}\le 2\ \ \ \text{et}\ \ \ \frac{1}{3}\le \frac{1}{p_0}+\frac{1}{p_1}<1.$$
\end{prop}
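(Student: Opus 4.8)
The plan is to read off everything from the Riemann--Hurwitz identity \eqref{G} of Proposition~\ref{courbeC}, together with the one arithmetic fact established just before it, namely that $\frac{1}{\tilde p}$ is integer-valued. The equality \eqref{G0} is immediate: specialising \eqref{G} to $g=0$, $n=3$, $b=1$ gives $1=2+\left(\frac{1}{p_0}+\frac{1}{p_1}+\frac{1}{p_\infty}-1\right)d$, which rearranges to \eqref{G0}. In particular $1-\frac{1}{p_0}-\frac{1}{p_1}-\frac{1}{p_\infty}=\frac{1}{d}>0$, so each $p_i$ is an integer $\ge2$ (and finite, since a pole with local monodromy of infinite order could never become apparent under a finite pull-back).

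The crucial input is a divisibility statement. By the standing hypothesis that $\phi^{*}(E,\nabla)$ has only apparent singularities --- equivalently, as noted before Proposition~\ref{courbeC}, that $\frac{1}{\tilde p}$ is integer-valued --- the covering $\phi$ ramifies above $x_i$ only at orders that are multiples of $p_i$: at a point of index $kp_i$ one has $\frac{1}{\tilde p}=k\in\mathbb Z$. Since the ramification indices over the fibre $\phi^{-1}(x_i)$ add up to $d=\deg\phi$, it follows that $p_i\mid d$ for every $i$. In particular $p_\infty\mid d$, so we may write $d=\ell\,p_\infty$ with $\ell\in\mathbb Z_{\ge1}$.

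Substituting $d=\ell p_\infty$ into \eqref{G0} and isolating the $1/p_\infty$ term yields
$$p_\infty\left(1-\frac{1}{p_0}-\frac{1}{p_1}\right)=1+\frac{1}{\ell}.$$
Since $\ell\ge1$, the right-hand side lies in the interval $(1,2]$: the bound $\le2$ is the first asserted inequality, while the bound $>1$ gives $\frac{1}{p_0}+\frac{1}{p_1}<1-\frac{1}{p_\infty}<1$. For the remaining lower bound, apply an automorphism of $\mathbb P^1$ permuting $\{0,1,\infty\}$ so that $p_0\le p_1\le p_\infty$; then $p_\infty\ge3$, because $p_\infty=2$ would force $p_0=p_1=p_\infty=2$ and hence $\frac{1}{p_0}+\frac{1}{p_1}+\frac{1}{p_\infty}=\frac{3}{2}>1$, contradicting hyperbolicity. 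Consequently $1-\frac{1}{p_0}-\frac{1}{p_1}=\frac{1+1/\ell}{p_\infty}\le\frac{2}{p_\infty}\le\frac{2}{3}$, i.e.\ $\frac{1}{p_0}+\frac{1}{p_1}\ge\frac{1}{3}$.

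The only step that is not purely formal is the divisibility $p_i\mid d$, and even that has essentially been carried out in the discussion preceding Proposition~\ref{courbeC}; everything else is elementary arithmetic. I therefore do not anticipate a genuine obstacle, beyond the cosmetic point of fixing the ordering $p_0\le p_1\le p_\infty$, which is legitimate because the statement privileges $p_\infty$.
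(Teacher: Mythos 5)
Your proof is correct and follows essentially the same route as the paper: the equality \eqref{G0} is read off from Proposition~\ref{courbeC} with $g=0$, $n=3$, $b=1$, and the inequalities come from the disappearance of the conic points upstairs (the paper uses only $d\ge p_\infty$ where you use the sharper divisibility $p_\infty\mid d$, yielding the identity $p_\infty\left(1-\frac{1}{p_0}-\frac{1}{p_1}\right)=1+\frac{1}{\ell}$) together with $p_\infty\ge 3$. Your explicit reduction to the ordering $p_0\le p_1\le p_\infty$ merely makes precise the paper's appeal to ``hyperbolicit\'e'' for $p_\infty\ge 3$, which implicitly assumes the same convention.
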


\begin{proof}
Le premier \'enonc\'e se d\'eduit imm\'ediatement de la Proposition \ref{courbeC}.
Le fait que les points coniques disparaissent en haut entra\^ine que le degr\'e $d\ge p_{\infty}$. 
Alors on a $$(1-\sum_{i=0,1,\infty}\frac{1}{p_i})p_{\infty}\le (1-\sum_{i=0,1,\infty}\frac{1}{p_i})d\le 1,$$ 
ce qui nous donne
 \begin{equation}\label{typesfi}
(1-\frac{1}{p_0}-\frac{1}{p_1})p_{\infty}\le 2.
\end{equation}
 Pour obtenir la seconde in\'egalit\'e, il est \'evident que $$\frac{1}{p_0}+\frac{1}{p_1}<1;$$  la minoration de cette in\'egalit\'e provient du fait que $p_{\infty}\ge 3$ (hyperbolicit\'e) : en substituant dans l'in\'egalit\'e (\ref{typesfi}), on obtient $\frac{1}{p_0}+\frac{1}{p_1}\ge \frac{1}{3}$.
 \end{proof}

 Dans le cas hyperbolique, le deuxi\`eme \'enonc\'e de la proposition \ref{courbeC0} nous permet d'obtenir la liste des triplets 
 $(p_0,p_1,p_{\infty})$ suivants :
 \begin{itemize}
\item $(2,3,p_{\infty})$ avec $p_{\infty}=7,\cdots,12$,
 \item $(2,4,p_{\infty})$ avec $p_{\infty}=5,\cdots,8$,
 \item $(2,5,p_{\infty})$ avec $p_{\infty}=5,6$,
 \item $(2,6,6)$,
  \item $(3,3,p_{\infty})$ avec $p_{\infty}=4,5,6$,
\item $(3,4,4)$ et  
\item $(4,4,4)$.
\end{itemize}

Si l'on veut pouvoir d\'eformer $\phi$, on doit avoir un point de branchement libre, 
i.e. en dehors des fibres des points orbifolds $0$, $1$ et $\infty$. Ceci entraine que $\phi$ doit totalement ramifier au dessus de $0$, $1$ et $\infty$
pr\'ecis\'ement \`a l'ordre $p_0$, $p_1$ et $p_{\infty}$ respectivement. En particulier, le degr\'e $d$ du rev\^etement doit \^etre
un multiple des $p_i$. Ceci, compte tenu de  (\ref{G0}) pour $d$ nous donne la liste suivante.

\begin{table}[htdp]
\begin{center}
\begin{tabular}{|c|c|c|}
\hline
$(p_0,p_1,p_\infty)$ & Degr\'e & Ramifications au dessus de $(0\ ;\ 1\ ;\ \infty)$\\
\hline
$(2, 3, 7)$&$42$&$(\underbrace{ 2+\cdots+2 }_{21 fois}\ ;\  \underbrace{ 3+\cdots+3 }_{14 fois}\ ;\ \underbrace{ 7+\cdots+7 }_{6 fois})$\\
\hline
$(2,3,8)$&$24$&$(\underbrace{ 2+\cdots+2 }_{12 fois}\ ;\  \underbrace{ 3+\cdots+3 }_{8 fois}\ ;\ 8+8+8)$\\
\hline
$(2,3,9)$&$18$&$(\underbrace{ 2+\cdots+2 }_{9 fois}\ ;\ \underbrace{ 3+\cdots+3 }_{6 fois}\ ;\ 9+9)$\\
\hline
$(2,3,12)$&$12$&$(\underbrace{ 2+\cdots+2 }_{6 fois}\ ;\ \underbrace{ 3+\cdots+3 }_{4 fois}\ ;\ 12)$\\
\hline
$(2,4,5)$&$20$&$(\underbrace{ 2+\cdots+2 }_{10 fois}\ ;\ \underbrace{ 4+\cdots+4 }_{5 fois}\ ;\ \underbrace{ 5+\cdots+5 }_{4 fois})$\\
\hline
$(2,4,6)$&$12$&$(\underbrace{ 2+\cdots+2 }_{6 fois}\ ;\  4+4+4\ ;\ 6+6)$\\
\hline
$(2,4,8)$&$8$&$(2+2+2+2\ ;\ 4+4\ ;\ 8)$\\
\hline
$(2,5,5)$&$10$&$(\underbrace{ 2+\cdots+2 }_{5 fois}\ ;\ 5+5\ ;\ 5+5)$\\
\hline
$(2,6,6)$&$6$&$(2+2+2\ ;\ 6\ ;\ 6)$\\
\hline
$(3,3,4)$&$12$&$(3+3+3+3\ ;\ 3+3+3+3\ ;\ 4+4+4)$\\
\hline
$(3,3,6)$& $6$& $(3+3\ ;\ 3+3\ ;\ 6)$\\
\hline
$(4,4,4)$&$4$& $( 4\ ;\ 4\ ;\ 4)$\\
\hline
\end{tabular}
\end{center}
\caption{Rev\^etements dans le cas hyperg\'eom\'etrique}
\label{hypergeom}
\end{table}%

Dans chacun des cas,  $\phi^*\nabla$ n'a qu'une singularit\'e apparente libre qui provient du point critique de $\phi$ en dehors de $\{0,1,\infty\}$.
Dans la derni\`ere section, nous expliquerons comment d\'emontrer l'existence de rev\^etements avec ces ramifications
et nous construirons l'avant dernier, de degr\'e $6$, explicitement.

\section{Classification dans le cas o\`u $(X,\nabla)$ n'est pas hyperg\'eom\'etrique.}

Toujours supposant $b=1$, la proposition \ref{courbeC} nous donne
$$\left(2g-2+\sum_{i=1}^n(1-\frac{1}{p_i} )\right)d=1.$$
La caract\'eristique d'Euler de l'orbifolde $(X,p)$ cro\^it rapidement avec le genre $g$
et le nombre $n$ de points conique (voir tables de la page 136 de \cite{Diarra}), 
et on trouve ais\'ement la liste de possibilit\'es exhaustive de la table \ref{highergenus}.
Dans chacun des cas, il y a en outre un point de ramification libre, en dehors des points coniques de $X$.

\begin{table}[htdp]
\begin{center}
\begin{tabular}{|c|c|c|}
\hline
$(g\ ;\ p_1,\ldots,p_n)$ & Degr\'e & Ramifications au dessus de $(p_1,\ldots,p_n)$\\
\hline
$(0\ ;\ 2,2,2,3)$&$6$&$(3+3\ ;\  3+3\ ;\  3+3\ ;\  2+2+2)$\\
\hline
$(0\ ;\ 2,2,2,4)$&$4$&$(2+2\ ;\ 2+2\ ;\ 2+2\ ;\ 4)$\\
\hline
$(0\ ;\ 2,2,2,2,2)$&$2$&$(2\ ;\ 2\ ;\ 2\ ;\ 2\ ;\ 2)$\\
\hline
$(1\ ;\ 2)$&$2$&$(2)$\\
\hline
\end{tabular}
\end{center}
\caption{Rev\^etements dans le cas non hyperg\'eom\'etrique}
\label{highergenus}
\end{table}

\section{Existence de rev\^etements}

Jusqu'\`a maintenant, nous n'avons consid\'er\'e que les obstructions donn\'ees par la formule de Riemann-Hurwitz,
ce qui nous a conduit aux tables \ref{hypergeom} et \ref{highergenus}. Il reste \`a montrer l'existence de tels rev\^etements,
c'est le {\it probl\`eme de Hurwitz}. Il suffit pour cela de trouver, par exemple dans le cas hyperg\'eom\'etrique, 
pour chaque donn\'ee $d$ et $(p_0,p_1,p_\infty)$, une repr\'esentation 
$$\pi_1(\mathbb P^1\setminus\{0,1,\infty,t\})\to\mathrm{Perm}(1,\ldots,d)$$
du groupe fondamental dans le groupe des permutations satisfaisant les propri\'et\'es suivantes.
Si l'on note $\sigma_0,\sigma_1,\sigma_\infty,\sigma_t\in\mathrm{Perm}(1,\ldots,d)$ les images des g\'en\'erateurs standards
du groupe fondamental, alors on veut 
\begin{itemize}
\item $\sigma_i$ est conjugu\'e au produit de $p_i$ permutations cycliques disjointes de longueur $\frac{d}{p_i}$ pour $i=0,1,\infty$,
\item $\sigma_t$ est une transposition,
\item $\sigma_0\circ\sigma_1\circ\sigma_\infty\circ\sigma_t=\text{identit\'e}$,
\item le groupe engendr\'e $\langle \sigma_0,\sigma_1,\sigma_\infty\rangle$ est transitif.
\end{itemize}
Par exemple, le rev\^etement de degr\'e $4$ en bas de la table \ref{hypergeom} est r\'ealis\'e par
$$\sigma_0=(1234),\ \sigma_1=(1324),\ \sigma_\infty=(1342)\ \ \ \text{et}\ \ \ \sigma_t=(12).$$
Nous d\'etaillons maintenant les deux rev\^etements de degr\'e $6$ de la table \ref{hypergeom}.

Pour $(p_0,p_1,p_\infty)=(2,6,6)$, remarquons qu'il existe un rev\^etement de degr\'e $6$ ramifiant 
uniquement au dessus de $0$, $1$ et $\infty$ et d\'esingularisant l'orbifolde $(\tilde p_0,\tilde p_1,\tilde p_\infty)=(2,6,3)$: c'est le quotient
de la courbe elliptique qui poss\`ede un automorphisme d'ordre $6$ par le groupe engendr\'e.
Il existe donc $\tilde\sigma_0$, $\tilde\sigma_1$ et $\tilde\sigma_\infty$ satisfaisant
\begin{itemize}
\item $\tilde \sigma_i$ est conjugu\'e au produit de $\tilde p_i$ permutations cycliques disjointes de longueur $\frac{d}{\tilde p_i}$ pour $i=0,1,\infty$,
\item $\tilde \sigma_0\circ\tilde \sigma_1\circ\tilde \sigma_\infty=\text{identit\'e}$,
\item le groupe engendr\'e $\langle \tilde \sigma_0,\tilde \sigma_1,\tilde \sigma_\infty\rangle$ est transitif.
\end{itemize}
En particulier, \`a conjugaison pr\`es, on peut supposer 
$\tilde\sigma_\infty=(123)(456)$.
Il suffit alors de poser $\sigma_i:=\tilde\sigma_i$ pour $i=0,1$, $\sigma\infty=(123456)$ et $\sigma_t=(14)$
et on a les relations attendues pour $(2,6,6)$.
De la m\^eme mani\`ere, on ram\`ene les cas $(2,4,8)$ et $(2,3,12)$ aux cas $(2,4,4)$ et $(2,3,6)$ respectivement.
Ces derniers sont d\'esingularis\'es par des rev\^etements elliptiques de degr\'es $4$ et $6$ respectivement,
mais on peut les composer par une isog\'enie de degr\'e $2$ ce qui nous donne les bons degr\'es $8$ et $12$.

Pour $(p_0,p_1,p_\infty)=(3,3,6)$, remarquons qu'il existe un rev\^etement de degr\'e $6$ ramifiant 
uniquement au dessus de $0$, $1$ et $\infty$ et d\'esingularisant l'orbifolde $(\tilde p_0,\tilde p_1,\tilde p_\infty)=(3,6,6)$: 
voir \cite{AbuOsmanRosenberger} (section 3.7). En notant comme avant $\tilde\sigma_0$, $\tilde\sigma_1$ et $\tilde\sigma_\infty$
les permutations d\'ecrivant ce rev\^etement, on peut supposer $\tilde\sigma_\infty=(123456)$.
Il suffit alors de poser $\sigma_i:=\tilde\sigma_i$ pour $i=0,1$, $\sigma\infty=(123)(456)$ et $\sigma_t=(14)$
et on a les relations attendues pour $(3,6,3)$. De la m\^eme mani\`ere, on ram\`ene les cas $(2,5,5)$, $(2,4,6)$ et $(2,3,9)$ 
respectivement aux cas $(2,5,10)$, $(2,4,12)$ et $(2,3,18)$ de \cite{AbuOsmanRosenberger}.

On voit facilement que l'orbifolde hyperg\'eom\'etrique $(3,3,4)$ est rev\^etement double de $(2,3,8)$ 
(ramifiant sur les points d'ordre $2$ et $8$) et que $(4,4,4)$ est rev\^etement triple de $(3,3,4)$
(ramifiant sur les deux points d'ordre $3$). En composant ces rev\^etements avec celui d'ordre $4$ 
construit au dessus de $(4,4,4)$ au tout d\'ebut, on obtient les rev\^etements recherch\'es pour 
$(2,3,8)$ et $(3,3,4)$. De la m\^eme mani\`ere, l'orbifolde hyperg\'eom\'etrique $(2,5,5)$ 
s'obtient par rev\^etement double de $(2,4,5)$, ce qui nous donne l'existence du rev\^etement 
recherch\'e pour $(2,4,5)$.

Pour le rev\^etement $(2,3,7)$ de degr\'e $42$, on peut le d\'ecomposer en un rev\^etement
de degr\'e $7$ par l'orbifolde de genre $0$ et de structure orbifolde $(2,2,2,3)$ (voir \cite{PascaliPetronio}, en haut de la table 2)
compos\'e avec un rev\^etement de degr\'e $6$ comme en haut de notre table \ref{highergenus}.
Enfin, pour cette derni\`ere table, l'existence des rev\^etements de degr\'e $6$ et $4$ se ram\`ene ais\'ement, par les m\'ethodes 
perturbatives d\'ecrites au dessus, aux cas de rev\^etements galoisiens (sans branchement libre) des 
orbifoldes de genre $0$ et de structure orbifolde $(2,2,3,3)$ et $(2,2,4,4)$ (voir \cite{HKNR}, list 1).

\section{Une famille de rev\^etements de degr\'e $6$}\label{RevDeg6genre2}

On veut construire un exemple explicite de rev\^etements $\phi:C_2\to \mathbb P^1$ de degr\'e $6$ 
ramifiant totalement au dessus de $0$, $1$ et $\infty$ \`a l'ordre $3$, $3$ et $6$ respectivement
et ayant un seul autre point de ramification simple, que nous pourrons d\'eformer.
Nous allons le construire comme composition de deux rev\^etements galoisiens $\phi=\phi_2\circ\phi_1$.
Le premier $\phi_1:E\to \mathbb P^1$ est de degr\'e $3$, ramifiant totalement
\`a l'ordre $3$ au dessus de $0$, $1$ et $\infty$ et nulle-part ailleurs : $E$ est 
la courbe elliptique poss\'edant une sym\'etrie d'ordre $3$ et $\phi_1$ est le quotient.
Le second $\phi_2:C_2\to E$ est un rev\^etement bielliptique de degr\'e $2$ ramifiant 
sur $\phi_1^{-1}(\infty)$ ainsi que sur un autre point libre, que nous pourrons faire varier
sur ma courbe elliptique $E$.

Le premier rev\^etement est par exemple donn\'e par
$$\phi_1:E=\{y^2=x^3+1\}\to\mathbb P^1_z\ ;\ (x,y)\mapsto z=\frac{y+1}{2}.$$
Pour calculer le second, on choisit un point $(x_t,y_t)\in E$, puis on compose le rev\^etement bielliptique
ramifiant au dessus des points $(x_t,\pm y_t)$ avec la translation par $(x_t,y_t)$. De cette mani\`ere,
le point $(x_t,-y_t)$ sera envoy\'e \`a l'infini de $E$, puis sur le point $\infty\in\mathbb P^1_z$ par $\phi_1$,
alors que le point $(x_t,y_t)$ sera envoy\'e sur son double dans $E$, puis par $\phi_1$, sur le point 
de branchement libre de $\mathbb P^1_z$. On notera $(x_1,y_1)$ les coordonn\'ees non translat\'ees
et $(x,y)$ les coordonn\'ees translat\'ees de sorte que $(x,y)=(x_1,y_1)\oplus(x_t,y_t)$. Rappelons 
les formules de translation sur $E$ :
$$\left\{\begin{matrix}
 x&=&a^2-x_1-x_t\\ y&=&-(ax+b)
 \end{matrix}\right.\ \ \ \text{o\`u}\ \ \ 
\left\{\begin{matrix}
 a&=&\frac{y_1-y_t}{x_1-x_t}\\ b&=&\frac{x_1y_t-x_ty_1}{x_1-x_t}
 \end{matrix}\right.$$
\`A partir de maintenant, notons plut\^ot $\phi_1:E\to\mathbb P^1$ le rev\^etement de degr\'e $3$
vu depuis les cooordonn\'ees non translat\'ees $(x_1,y_1)$. En simplifiant les puissances de $y_1$
et $y_t$ avec l'\'equation de la courbe elliptique $E$, on trouve
$$\phi_1(x_1,y_1)=\frac{-(3x_t^2x_1+x_t^3+4)y_1+(1+y_t)x_1^3+3x_t(y_t-1)x_1^2+3x_t^2x_1+4y_t-x_t^3}{2(x_1-x_t)^3}.$$
Les $3$ points de ramification sont donn\'es par
$$\left\{\begin{matrix}
E&\stackrel{\oplus(x_t,y_t)}{\longrightarrow}&E&\stackrel{\frac{y+1}{2}}{\longrightarrow}&\mathbb P^1\\
(x_1,y_1)&\mapsto&(x,y)&\mapsto&z\\
\left(\frac{2(1-y_t)}{x_t^2},\frac{4y_t-x_t^3-4}{x_t^3}\right)&\mapsto&(0,-1)&\mapsto&0\\
\left(\frac{2(1+y_t)}{x_t^2},\frac{4y_t+x_t^3+4}{x_t^3}\right)&\mapsto&(0,1)&\mapsto&1\\
\left(x_t,-y_t\right)&\mapsto&(\infty,\infty)&\mapsto&\infty
 \end{matrix}\right.$$
Le rev\^etement bielliptique est obtenu par exemple en introduisant la variable $X^2=\frac{x_1-x_t}{x_1+1}$
(notons que $x_1$ ramifie d\'ej\`a sur $-1$) et la variable $Y=y_1(X^2-1)^2$ afin de mettre l'\'equation
de $C_2$ sous forme hyperelliptique :
$$C_2=\{Y^2+(x_t+1)(X^2-1)(3X^4+3(x_t-1)X^2+x_t^2-x_t+1)\}$$
et la projection sur $E$ est donn\'ee par :
$$\phi_2:C_2\to E\ ;\ (X,Y)\mapsto (x_1,y_1)=\left(-\frac{X^2+x_t}{X^2-1},\frac{Y}{(X^2-1)^2}\right).$$
Les deux points de ramification sont donn\'es par 
$$\left\{\begin{matrix}
C_2&\stackrel{\phi_2}{\longrightarrow}&E\\
(X,Y)&\mapsto&(x_1,y_1)\\
(0,y_t)&\mapsto&(x_t,y_t)\\
(0,-y_t)&\mapsto&(x_t,-y_t)
 \end{matrix}\right.$$
 En composant $\phi_1$ et $\phi_2$ (et en simplifiant les puissances de $y_t$), 
 il vient
$$\begin{matrix}{\phi(X,Y)=}{\frac{(x_t-2)^2X^2(Y-3y_t)+4(x_t^2-x_t+1)(y_t-Y)+(x_t^2+2x_t+1-3y_t)X^6-6y_t(x_t-2)X^4}{2(1+x_t)^2X^6}.} \end{matrix}$$

{\small

}

\begin{thebibliography}{99}

\bibitem{AbuOsmanRosenberger} M. T. Abu Osman et G. Rosenberger, Embedding property of surface groups. 
Bull. Malaysian Math. Soc. 3 (1980) 21-27.

\bibitem{AndreevKitaev} F. V. Andreev et A. V. Kitaev, Transformations $RS^2_4(3)$ of the ranks $\le 4$ and algebraic solutions of the sixth Painlev\'e equation. Comm. Math. Phys. 228 (2002) 151-176.

\bibitem{Boalch1} P.  Boalch, From Klein to Painlev\'e via Fourier, Laplace and Jimbo. Proc. London Math. Soc. 90 (2005) 167-208. 

\bibitem{Boalch2} P.  Boalch,  The fifty-two icosahedral solutions to Painlev\'e VI. J. Reine Angew. Math. 596 (2006) 183-214. 

\bibitem{Boalch3} P. Boalch,  Six results on Painlev\'e VI. Th\'eories asymptotiques et \'equations de Painlev\'e, 1-20, S\'emin. Congr., 14, Soc. Math. France, Paris, 2006. 

\bibitem{Boalch4} P.  Boalch, Higher genus icosahedral Painlev\'e curves. Funkcial. Ekvac. 50 (2007), no. 1, 19-32. 

\bibitem{Boalch5} P. Boalch,  Some explicit solutions to the Riemann-Hilbert problem. Differential equations and quantum groups, 85-112, IRMA Lect. Math. Theor. Phys., 9, Eur. Math. Soc., Z\"urich, 2007. 

\bibitem{Boalch6} P.  Boalch, Towards a non-linear Schwarz's list. The many facets of geometry, 210-236, Oxford Univ. Press, Oxford, 2010.

\bibitem{Chiarellotto} B. Chiarellotto, On Lam\'e Operators which are Pullbacks of Hypergeometric Ones. 
Trans. Amer. Math. Soc. 347 (1995) 2735-2780.

\bibitem{DiarraThese} K. Diarra, Construction de d\'eformations isomonodromiques par rev\^etements, 
Doctoral thesis, Universit\'e de Rennes 1 (2011). http://tel.archives-ouvertes.fr/

\bibitem{Diarra} K. Diarra, Construction et classification de certaines solutions alg\'ebriques des syst\`emes de Garnier. 
Bull. Braz. Math. Soc., New Series 44  (2013) 1-26.

\bibitem{Doran} C. F. Doran, Algebraic and Geometric Isomonodromic Deformations. J. Differential Geometry 59 (2001) 33-85.

\bibitem{DubrovinMazzocco} B. Dubrovin et M.  Mazzocco, 
Monodromy of certain Painlev\'e-VI transcendents and reflection groups. 
Invent. Math. 141 (2000) 55-147. 

\bibitem{Hitchin1} N. Hitchin, 
Poncelet polygons and the Painlev\'e equations. Geometry and analysis (Bombay, 1992), 151-185, Tata Inst. Fund. Res., Bombay, 1995. 

\bibitem{Hitchin2} N. Hitchin, A lecture on the octahedron. 
Bull. London Math. Soc. 35 (2003) 577-600. 

\bibitem{HKNR} A. Hulpke, T. Kuusalo, M. N\"a\"at\"anen et G. Rosenberger, 
On orbifold coverings by genus 2 surfaces.
Sci. Ser. A Math. Sci. (N.S.) 11 (2005) 45-55. 

\bibitem{IKSY} K. Iwasaki, H. Kimura, S. Shimomura et M. Yoshida, 
From Gauss to Painlev\'e. 
A modern theory of special functions. Aspects of Mathematics, E16. Friedr. Vieweg and Sohn, Braunschweig, 1991.

\bibitem{Kitaev1} A. V. Kitaev, Special functions of isomonodromy type, rational transformations of the spectral parameter, and algebraic solutions of the sixth Painlev\'e equation.  Algebra i Analiz 14 (2002) 121-139.

\bibitem{Kitaev2} A. V. Kitaev,  Grothendieck's dessins d'enfants, their deformations, and algebraic solutions of the sixth Painlev\'e and Gauss hypergeometric equations. Algebra i Analiz 17 (2005) 224-275.


\bibitem{Klein} F. Klein, Vorlesungen \"uber das Ikosaedar, B. G. Teubner, Leipzig (1884).

\bibitem{Krichever} I. Krichever, 
Isomonodromy equations on algebraic curves, canonical transformations and Whitham equations. 
Mosc. Math. J. 2 (2002) 717-752. 

\bibitem{LisovyyTykhyy} O. Lisovyy et Y. Tykhyy, Algebraic solutions of the sixth Painlev\'e equation.
	arXiv:0809.4873 [math.CA]

\bibitem{Mazzocco1} M. Mazzocco, Rational solutions of the Painlev\'e VI equation. 
Kowalevski Workshop on Mathematical Methods of Regular Dynamics (Leeds, 2000). 
J. Phys. A 34 (2001) 2281-2294. 

\bibitem{Mazzocco2} M. Mazzocco, 
Picard and Chazy solutions to the Painlev\'e VI equation. 
Math. Ann. 321 (2001) 157-195. 

\bibitem{Mednykh} A. Mednykh, 
Counting conjugacy classes of subgroups in a finitely generated group. 
J. Algebra 320 (2008) 2209-2217. 

\bibitem{PascaliPetronio} M. A. Pascali et C. Petronio, 
Branched covers of the sphere and the prime-degree conjecture. 
Ann. Mat. Pura Appl. 191 (2012) 563-594. 

\bibitem{PervovaPetronio1} E. Pervova et C. Petronio,
On the existence of branched coverings between surfaces with prescribed branch data. I.
Algebr. Geom. Topol. 6 (2006) 1957-1985.

\bibitem{PervovaPetronio2} E. Pervova et C. Petronio,
On the existence of branched coverings between surfaces with prescribed branch data. II.  
J. Knot Theory Ramifications 17 (2008) 787-816. 

\end{thebibliography}
\end{document}